\documentclass[a4paper, 11pt]{article}

\usepackage{mani}
\begin{document}

\title{A Lost Counterexample and a Problem on Illuminated Polytopes}
\author{Ronald F.~Wotzlaw\thanks{\DFGsupportRFW} \qquad G\"unter M.~Ziegler\thanks{\DFGsupportGMZ}} 
\date{August 12, 2009} 
\maketitle

\noindent
In a \emph{Note added in proof} to a 1984 paper,
Daniel A.~Marcus claimed to have a counterexample to his conjecture that a minimal
positively $k$-spanning vector configuration in $\R^m$ has size at most $2km$.
However, the counterexample was never published, and seems to be lost.

Independently, ten years earlier, Peter Mani in 1974 solved a problem by Hadwiger,
disproving that every “illuminated” $d$-dimensional polytope must have at least $2d$
vertices.

These two studies are related by Gale duality, an elementary linear algebra technique
devised by Micha A. Perles in the sixties. Thus, we note that Mani's study provides
a counterexample for Marcus' conjecture with exactly the parameters that Marcus had claimed. In the other direction,
with Marcus' tools we provide an answer to a problem left open by Mani: 
Could “illuminated” $d$-dimensional polytopes on a minimal number of vertices be nonsimplicial?
 
\section{Marcus' lost counterexample and Mani's problem}

A \emph{\psc $U$} in $\R^m$ is a finite 
configuration of vectors (multiples are allowed)
that positively span $\R^m$,  that is, if $U = \{ u_1, \ldots, u_n \}$ 
and $v \in \R^m$, there are real nonnegative numbers 
$\lambda_1, \ldots, \lambda_n$ such that
\[
v = \lambda_1 u_1 + \lambda_2 u_2 + \ldots + \lambda_n u_n.
\]
A \emph{\pksc{k}} is a positive spanning
vector configuration that is still positively spanning 
even if at most $k-1$ vectors are deleted from the configuration.
 
In two papers~\cite{Marcus1981}~\cite{Marcus1984}, 
dating from 1981 and 1984, Marcus studied
properties of positive $k$-spanning vector configurations. In particular, 
he was interested in upper bounds on the cardinality of \emph{minimal}
\pkscs{k}, that is, of 
\pkscs{k} that are minimal with 
respect to inclusion:

\begin{question}[Marcus~\cite{Marcus1981}~\cite{Marcus1984}; see also~\cite{McMullen1979}]
\label{question:marcus1}
What is the maximum size of a minimal \pksc{k} in $\R^m$?
\end{question}

A classical result, known as the \emph{Blumenthal--Robinson}
theorem~\cite{Blumenthal1953}~\cite{Davis1954}~\cite{Shephard1971}, states 
that for $k=1$ the exact answer is $2m$. Marcus
conjectured that the answer for the general case is $2km$. This would
clearly be best possible: The configuration that consists of  
$k$ copies of the standard basis vectors and their negatives, that is, 
$\pm e_1, \ldots, \pm e_m$, is a minimal \pksc{k} in $\R^m$.
Yet for $k\geq 2$ it is not obvious that there is a finite upper bound.
However, this was proved by Marcus; it can also
be derived from \emph{Perles' Skeleton Theorem} for (convex)
polytopes~\cite{Kalai1994}; see~\cite{Wotzlaw2009}. 

In the following, we focus on the case $k=2$, which is particularly interesting. 
It was pointed out and used by Marcus~\cite{Marcus1981}~\cite{Marcus1984} that 
via Gale diagrams the
above question translates into a question about polytopes. Indeed, 
for $k\geq 2$ every \pksc{k} on $n$ vectors is a \emph{Gale diagram} of 
a $d$-dimensional polytope $P$ on $f_0=n$ vertices, where $d = n - m - 1$.
The main relation between the vector configuration and the polytope
that we will use (which have the same size/number of vertices, but “live”
in different dimensions) is that the minimal positively-spanning 
subconfigurations of vectors correspond exactly to the complements of
the vertex sets of facets of the polytope; for details of this construction see, for example,
\cite[Sect.~5.4]{Gruenbaum1967} or \cite[Lect.~6]{Ziegler1995}.  

The fact that we look at a \emph{minimal} positively $k$-spanning vector
configuration for $k=2$ translates, via Gale duality, into the 
following special property of the polytope: For every vertex $u$ of $P$ there 
is a different vertex $v$ such that
$u$ and $v$ are not connected by an edge of $P$, that is, the 
edge between $u$ and $v$ is \emph{missing}.
In other words, no vertex is connected to all the other vertices.
 We will call a polytope with this property \emph{unneighborly}.

With this translation, Question~\ref{question:marcus1} may be rephrased as
follows:

\begin{question}[Marcus~\cite{Marcus1981}~\cite{Marcus1984}]
\label{question:marcus2}
What is the minimum number of vertices of an unneighborly $d$-polytope?
\end{question}

Marcus' conjectured bound of $4m$ for a \pksc{2} in $\R^m$ would 
imply that an unneighborly polytope in dimension $d$ has at least
$4(d+1)/3$ vertices. This is wrong, as we shall see, and it seems
that this was noticed by Marcus: In~\cite{Marcus1984},  
a ``Note added in proof'' says that he had discovered an 
unneighborly polytope of dimension $d=36$ on $f_0 = 49$ vertices.
(The bound of $4(d+1)/3$ would imply
that such a polytope would need to have at least $50$ vertices.) 
However, there is no mention of a construction for this polytope
or any kind of reference so that would help to recover this example.
McMullen, who reviewed Marcus' paper for ``Mathematical Reviews'' and
``Zentralblatt'' had not seen the counterexample either~\cite{McMullen2009p}.
In the following we will refer to this polytope as \emph{Marcus' lost counterexample}. 

As far as we know, Marcus was not aware of the work Peter Mani had done ten years earlier.
He had studied a question by Hugo Hadwiger on \emph{illuminated polytopes}. 
These are polytopes in which every vertex lies on an \emph{inner diagonal}, that is, 
a segment connecting two vertices that passes through the interior of the polytope.
The $d$-dimensional crosspolytope, that is,
the polytope generated from the points $\pm e_1, \pm e_2, \ldots, \pm e_d$ is an
example for such a polytope on $2d$ vertices.
Clearly, inner diagonals are missing edges, and thus every illuminated
polytope is also unneighborly. (McMullen~\cite{McMullen2009p} proposes to call them
\emph{strongly unneighborly}.) 
 
\begin{question}[Hadwiger~\cite{Hadwiger1972}]
\label{question:hadwiger}
What is the minimum number of vertices of an illuminated $d$-polytope?
More specifically, is it $2d$?
\end{question}

In~\cite{Mani1974}, Mani gave a remarkable complete answer to this question. Indeed, 
the conjectured bound of $2d$ turned out to be wrong, whereas the correct bound
is roughly $d + 2\sqrt{d}$. More precisely, Mani showed that
every illuminated $d$-polytope has at least
\[
M(d) \coloneqq \min \{ 2d, \unnmani \}
\]
vertices, where we set $p(d) \coloneqq \ceil{\frac{\sqrt{4d+1}-1}{2}}$
for $d \geq 1$.
McMullen~\cite{McMullen2009p} has noted that one can 
write the function $M(d)$ in the following simple form:
\[
M(d) = \min \{2d, \ceil{(\sqrt{d}+1)^2} \} = \min \{2d, d+1+\ceil{2\sqrt{d}} \}.
\]

According to Mani, every illuminated polytope
has at least $M(d)$ vertices and examples on $M(d)$ vertices exist:
They are obtained from a cyclic $d$-polytope with $d + p(d)$ vertices by 
``stacking'' a new vertex on $\ceil{d/p(d)} + 1$ well-chosen facets. 
(The operation \emph{stacking a facet} of a polytope $P$ 
is performed in the following way: Take  a new point that is beyond the 
facet we want to stack but beneath all other
facets. Then take the convex hull of $P$ and the new point. Observe that
this creates inner diagonals from the new point to each of the vertices
of $P$ that do not lie on the stacked facet.)

In particular, for large enough $d$ there are 
illuminated --- and thus also unneighborly --- polytopes on much fewer vertices
than $4(d+1)/3$. Indeed, the smallest $d$ where this occurs is $d=36$,
and Mani's illuminated polytope in dimension $d=36$ has 
$M(36)=49$ vertices, so it has exactly the same parameters as Marcus' 
lost counterexample. We will probably never know whether this is the
same polytope that Marcus had in mind: He never published on the subject again,
and when we tried to contact him via the 
California State Polytechnic University in Pomona, where he had been on
the faculty for a number of years, we received the information that
he had passed away some years ago.

We will refer to illuminated polytopes on the minimal number of $f_0=M(d)$ vertices as \emph{Mani polytopes}.
The Mani polytopes constructed by Mani himself are, by construction, \emph{simplicial}
(that is, all facets are simplices). In relation to
Question~\ref{question:hadwiger}, Mani thus asked:

\begin{question}[Mani~\cite{Mani1974}; see also Bremner \& Klee~\cite{BremnerKlee1999}]
\label{question:mani}
Are all illuminated polytopes with the minimum number of
vertices simplicial? Or are there nonsimplicial Mani polytopes?
\end{question}

Below we give a complete answer to this question: Up to dimension $5$, all
Mani polytopes are simplicial, and there is only one combinatorial type,
given by the crosspolytope. 
For every $d \geq 6$ though, we construct a nonsimplicial Mani polytope
on the minimum number of vertices. This corrects a statement
by Bremner \& Klee~\cite{BremnerKlee1999}, who had claimed that up to dimension
$7$ all extremal illuminated polytopes were crosspolytopes. 
 
Thus our 
construction that solves Mani's question is based on a Gale diagram 
construction, thus on a construction that uses Marcus' \pkscs{k}. 
In turn, Marcus' conjecture on the size of minimal \pkscs{k} is refuted
by taking Mani's viewpoint of illuminated polytopes. 

To summarize, the precise answers for Questions~\ref{question:marcus1} 
and~\ref{question:marcus2} remain
open, although Marcus' conjectured answers are refuted by Mani's construction
as well as by our construction in this paper: They yield illuminated $d$-polytopes
on roughly $d+2\sqrt{d}$ vertices, while Marcus' work implies that an unneighborly
$d$-polytope has at least  roughly $d+\sqrt{2d}$ vertices. 
Question~\ref{question:hadwiger} was
solved by Mani~\cite{Mani1974}, and Question~\ref{question:mani} is solved 
by our construction.

Marcus' conjecture for
Question~\ref{question:marcus1} is proven wrong for all $k \geq 2$  
in the first author's PhD thesis~\cite{Wotzlaw2009} 
based on Mani's construction for the case $k=2$.

We briefly introduce some notation. If $P$ is a $d$-polytope (which in the
following will always be assumed to be full-dimensional, that is, embedded
in $\R^d$), we denote by $\verts(P)$ the set of vertices and by
$f_0 = f_0(P) = |\verts(P)|$ the number of vertices. For polytope terminology
we refer to~\cite{Gruenbaum1967} and~\cite{Ziegler1995}.

\section{Unique Mani polytopes}

We show that in dimensions $1 \leq d \leq 5$ Mani polytopes are
combinatorially unique. Thus the only combinatorial type that appears is the
$d$-crosspolytope. 
Our argument is mainly based on the original results by Mani~\cite{Mani1974} and 
the subsequent simplification by Rosenfeld~\cite{Rosenfeld1974}.

\begin{defn}[Self illuminated, opposite sets]
Let $P$ be an illuminated $d$-polytope. A set of vertices $U \subseteq \verts(P)$ is said
to \emph{illuminate itself}
if for every vertex
$v \in U$ there is a vertex $u \in U$ such that 
\[
[u,v] \coloneqq \{ x \in \R^d : x = \lambda u + (1-\lambda) v, \lambda \in [0,1]
\}
\]
is an inner diagonal.

A set $W \subseteq \verts(P)$ is said to \emph{lie opposite the vertex $v \in \verts(P)$}
if for every $w \in W$ the segment $[v,w]$ is an inner diagonal and
$\verts(P) \setminus (W \cup \{ v \})$ illuminates itself.

Let $\Gamma(P) \coloneqq \max\{ |W| : \text{$W$ lies opposite some $v \in \verts(P)$} \}$.
\end{defn}

\noindent
The following is a slightly
stronger statement than the main result from~\cite{Rosenfeld1974} 
that is easily extracted from Rosenfeld's proof.
 
\begin{lemma}[Rosenfeld~\cite{Rosenfeld1974}]
\label{thm:rosenfeld}
Let $P$ be an illuminated $d$-polytope. If $\Gamma(P) = 1$, 
then $f_0 \geq 2d$ and there is a perfect matching on the inner diagonals, that
is, there are pairwise vertex-disjoint inner diagonals that cover all vertices
of $P$.
\end{lemma}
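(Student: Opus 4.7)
I would deduce the cardinality bound from the perfect matching, so the bulk of the work concerns producing the matching.

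For the perfect matching, I would argue by contradiction via a maximum matching $M$ in the inner-diagonal graph $G$ on $\verts(P)$. Assume $M$ is not perfect and pick an unmatched vertex $u$. Since $P$ is illuminated, $u$ has a $G$-neighbor $w$; by maximality, $w$ must be matched, say $\{w, w'\} \in M$. Applying $\Gamma(P) = 1$ to the vertex $w$ with its pair of $G$-neighbors $\{u, w'\}$, the set $\verts(P) \setminus \{w, u, w'\}$ fails to illuminate itself, so there is a vertex $z \notin \{w, u, w'\}$ whose diagonal-neighbors all lie in $\{w, u, w'\}$. A case analysis eliminates almost every possibility: if $z$ were matched, its $M$-mate would have to lie in $\{w, u, w'\}$, but $u$ is unmatched and $w,w'$ are already paired together, ruling this out; if $z$ were $G$-adjacent to $u$, then $\{u,z\}$ would enlarge $M$; and if $z$ were $G$-adjacent to $w'$, the path $u - w - w' - z$ would be an augmenting $M$-alternating path. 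All three subcases contradict the maximality of $M$, so the only remaining possibility is that $z$ is unmatched with $w$ as its unique $G$-neighbor.

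One then iterates. Applying $\Gamma(P) = 1$ to $w$ with a pair $\{u, z\}$ of its unmatched $G$-neighbors, or to $u$ with a pair $\{w, w_2\}$ if $u$ has another $G$-neighbor $w_2$, produces either a fresh pendant at $w$ or one of the already-eliminated configurations — and the finiteness of $\verts(P)$ eventually forces an augmenting path, contradicting the maximality of $M$. Hence $M$ is perfect.

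For the cardinality bound, given a perfect matching $\{(v_i, w_i) : 1 \leq i \leq m\}$ on inner diagonals, I claim the vectors $v_i - w_i$ span $\R^d$, which forces $f_0 = 2m \geq 2d$. Otherwise a nonzero linear functional $\phi$ would vanish on every $v_i - w_i$, so $\phi(v_i) = \phi(w_i)$ for every $i$; letting $F$ be the proper face of $P$ on which $\phi$ attains its maximum, any $j$ with $v_j \in F$ would also have $w_j \in F$, placing the entire segment $[v_j, w_j]$ inside the boundary face $F$ and contradicting that it is an inner diagonal. The main obstacle I foresee is closing the case analysis in the pendant sub-case: when $u$ and $z$ share $w$ as their sole $G$-neighbor, neither direct augmentation applies, and a careful combinatorial descent using repeated applications of $\Gamma(P) = 1$ together with the finiteness of $\verts(P)$ is required to convert the accumulated pendant structure into a contradiction.
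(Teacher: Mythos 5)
The paper does not give its own proof of this lemma: it cites Rosenfeld~\cite{Rosenfeld1974} and remarks that the statement is easily extracted from his argument, so there is no in-paper proof to compare against. Evaluating your proposal on its own merits: the dimension bound is proved correctly and completely. If the matching directions $v_i - w_i$ failed to span $\R^d$, a nonzero functional $\phi$ annihilating all of them would attain its maximum on a proper face $F$; any vertex of $F$ is some $v_j$ or $w_j$, and then its mate would also lie in $F$, placing an inner diagonal in the boundary — contradiction. Hence the $m$ directions span $\R^d$, $m \ge d$, and $f_0 = 2m \ge 2d$.

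For the matching, the maximum-matching/augmenting-path framework is the right one, and the ``pendant'' obstacle you flag does close after essentially one more application of $\Gamma(P)=1$; no open-ended descent is needed. After your first application you have an unmatched vertex $z$ whose only diagonal-neighbor is $w$. Now apply $\Gamma(P)=1$ to $w$ with $W=\{z,w'\}$: there is $z_2\notin\{z,w,w'\}$ all of whose diagonal-neighbors lie in $\{z,w,w'\}$. Since $z$'s only neighbor is $w$, $z_2\not\sim z$; $z_2$ cannot be matched (its $M$-mate would have to lie in $\{w,w'\}$, but those two are paired with each other); and $z_2\sim w'$ would give the augmenting path $z_2-w'-w-z$. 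So $z_2$ is unmatched with $w$ as its only diagonal-neighbor. Let $T$ be the set of all vertices whose unique diagonal-neighbor is $w$; then $z,z_2\in T$ are distinct, so $|T|\ge 2$. Apply $\Gamma(P)=1$ once more to $w$ with $W=T$. If $T\cup\{w\}=\verts(P)$ the complement is empty, vacuously illuminates itself, and $T$ lies opposite $w$ with $|T|\ge 2$, contradicting $\Gamma(P)=1$ directly. Otherwise there is $y\notin T\cup\{w\}$ whose diagonal-neighbors all lie in $T\cup\{w\}$; but every $t\in T$ has $w$ as its only neighbor, so $y\not\sim t$, forcing $y$'s only diagonal-neighbor to be $w$ and hence $y\in T$ — contradiction. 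So no unmatched vertex exists and $M$ is perfect. With this insertion your argument is sound.
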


The next lemma is easily derived from results by 
Mani; just combine the statements in~\cite[Lemma $1$, Proposition $2$, and Proposition $3$]{Mani1974}.

\begin{lemma}[{Mani~\cite{Mani1974}}]
\label{lem:opposite-large}
Let $d \geq 3$, let $P$ be a Mani $d$-polytope, and assume that $\Gamma(P) \geq 2$. 
Then $f_0(P) \geq \unnmani$.
\end{lemma}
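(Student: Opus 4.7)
My plan is to follow the paper's hint that the lemma is obtained by combining Lemma~1, Proposition~2, and Proposition~3 of~\cite{Mani1974}. Fix witnesses $v\in\verts(P)$ and $W\subseteq\verts(P)$ with $|W|=\Gamma(P)\geq 2$; write $r:=|W|$ and $U:=\verts(P)\setminus(W\cup\{v\})$, so that
\[
f_0(P)\;=\;1+r+|U|.
\]

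The first ingredient (Mani's Lemma~1) observes that, since $[v,w]$ is an inner diagonal for every $w\in W$, no facet of $P$ contains both $v$ and a vertex of $W$. Hence every facet through $v$ is contained in $U\cup\{v\}$, and the vertex figure $P/v$ --- a $(d-1)$-polytope --- is supported entirely on $U$. This already gives the crude bound $|U|\geq d$ and identifies a $(d-1)$-dimensional ``cap'' of $P$ sitting on $U$ opposite $v$.

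The second ingredient (Mani's Propositions~2 and~3) sharpens this by exploiting both the self-illumination of $U$ (every $u\in U$ needs an inner-diagonal partner in $U$) and the maximality of $|W|=\Gamma(P)$ (no further vertex of $U$ can be moved into $W$ without destroying the self-illumination of the complement). The output is the sharpened inequality
\[
|U|\;\geq\;d+\Bigl\lceil\frac{d}{r}\Bigr\rceil,
\]
which combined with the count above yields
\[
f_0(P)\;\geq\;d+1+r+\Bigl\lceil\frac{d}{r}\Bigr\rceil.
\]
An elementary integer optimization --- the continuous minimum of $r+d/r$ on $(0,\infty)$ is $2\sqrt{d}$ at $r=\sqrt{d}$, and the integer minimum of $r+\lceil d/r\rceil$ equals $\lceil 2\sqrt{d}\rceil$, attained at $r=p(d)$, as is transparent from McMullen's reformulation already cited in the introduction --- then gives
\[
f_0(P)\;\geq\;d+1+\lceil 2\sqrt{d}\rceil\;=\;\unnmani.
\]

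I expect the main obstacle to be the sharp inequality $|U|\geq d+\lceil d/r\rceil$: this is \emph{not} a consequence of counting facets through $v$ alone, but requires a delicate analysis of the cap polytope on $U\cup\{v\}$ that simultaneously uses the self-illumination of $U$ and the maximality of $|W|$. This is precisely where the assumption $d\geq 3$ enters, to rule out low-dimensional degeneracies in the link of $v$. Everything else --- the setup, the first observation, and the final optimization --- is essentially bookkeeping around Mani's three statements.
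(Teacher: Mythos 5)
The paper itself does not give a proof of Lemma~\ref{lem:opposite-large}: the entire argument in the source is the single sentence ``just combine the statements in [Lemma~1, Proposition~2, and Proposition~3] of Mani~\cite{Mani1974}.'' So there is no detailed paper proof to compare your reconstruction against; one can only judge whether your reconstruction is internally complete and whether it plausibly matches Mani's cited results.

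As written, your proposal has a genuine gap, and you say so yourself. The decomposition $\verts(P)=\{v\}\cup W\cup U$ with $r=|W|=\Gamma(P)$, the observation that no facet (and hence no edge) at $v$ meets $W$, and the ensuing $|U|\geq d$ are all fine. The closing optimization is also correct: for integers $r\geq 2$ one has $r+\lceil d/r\rceil\geq \lceil 2\sqrt d\rceil = p(d)+\lceil d/p(d)\rceil$, so \emph{if} one had $|U|\geq d+\lceil d/r\rceil$ then $f_0(P)=1+r+|U|\geq d+1+p(d)+\lceil d/p(d)\rceil$ would follow. But the inequality $|U|\geq d+\lceil d/r\rceil$ is exactly the content of the lemma, and your writeup does not prove it --- it only asserts that it should be what Mani's Propositions~2 and~3 deliver, and flags it as ``the main obstacle.'' You have restated the lemma in a convenient parametrized form, not proved it. Since you had no access to Mani's paper, you also cannot confirm that this particular inequality is even the route Mani takes (he could, for instance, argue by a covering/counting scheme over all stacked vertices rather than via a single extremal pair $(v,W)$). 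Until the inequality $|U|\geq d+\lceil d/r\rceil$ is actually established --- using the self-illumination of $U$, the maximality of $|W|$, and the hypothesis $d\geq 3$ --- the proposal is an outline with a hole precisely where the hard work lives, which incidentally is the same place the paper punts to Mani.
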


\begin{cor}
\label{cor:opposite-large}
Let $d \geq 3$, let $P$ be a Mani $d$-polytope with $\Gamma(P) \geq 2$. 
Then $d \geq 6$.
\end{cor}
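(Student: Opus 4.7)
The plan is to combine the exact vertex count forced by the definition of a Mani polytope with the lower bound provided by Lemma~\ref{lem:opposite-large}, and then extract a numerical inequality in $d$.

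First I would recall that, by definition, a Mani $d$-polytope $P$ has
\[
f_0(P) = M(d) = \min\{ 2d,\ \lceil (\sqrt{d}+1)^2 \rceil \}.
\]
Assuming $d\geq 3$ and $\Gamma(P)\geq 2$, Lemma~\ref{lem:opposite-large} gives the lower bound $f_0(P)\geq \lceil(\sqrt d+1)^2\rceil$. Combining these, the minimum defining $M(d)$ must be attained by the $2d$ branch, i.e.\
\[
2d \;\geq\; \lceil (\sqrt{d}+1)^2 \rceil \;\geq\; (\sqrt{d}+1)^2 \;=\; d+2\sqrt{d}+1.
\]

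Next I would simplify this to $d-2\sqrt{d}-1\geq 0$, i.e.\ $(\sqrt{d}-1)^2\geq 2$, so $\sqrt{d}\geq 1+\sqrt{2}$ and hence $d\geq (1+\sqrt{2})^2 = 3+2\sqrt{2} > 5$. Since $d$ is an integer, this forces $d\geq 6$, which is the desired conclusion.

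There is no real obstacle here: the corollary is essentially an arithmetic unpacking of Lemma~\ref{lem:opposite-large} against the definition of $M(d)$. The only thing worth double-checking is the small cases $d=3,4,5$, where one verifies directly that $M(d) = 2d < \lceil (\sqrt d+1)^2\rceil$ (namely $6<8$, $8<9$, $10<11$), so that Lemma~\ref{lem:opposite-large} is incompatible with $P$ being a Mani polytope in those dimensions; this confirms the inequality argument above.
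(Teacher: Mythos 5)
Your proof is correct and follows essentially the same route as the paper: both invoke Lemma~\ref{lem:opposite-large} against the definition $f_0(P)=M(d)$ to force $2d\geq\unnmani$ and then conclude $d\geq 6$. The only cosmetic difference is that you solve the resulting quadratic in $\sqrt d$ (using McMullen's closed form for $\unnmani$), whereas the paper simply checks $d=3,4,5$ directly; your numerical double-check at the end is in fact exactly the paper's argument.
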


\begin{proof}
By Lemma~\ref{lem:opposite-large}, the number of vertices is at least $\unnmani$.
But for $3 \leq d \leq 5$ we have that $\unnmani > 2d$. Since $P$ is a Mani polytope
we must have $d \geq 6$.
\end{proof}

\begin{theorem}
\label{thm:crosspolytopes}
For $1 \leq d \leq 5$ there is exactly one combinatorial type of
Mani $d$-polytope, namely the $d$-dimensional crosspolytope.
\end{theorem}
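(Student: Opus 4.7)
The plan is as follows. Existence is immediate: the $d$-crosspolytope is a simplicial illuminated polytope on $2d = M(d)$ vertices in every dimension $d\leq 5$, so the content of the theorem is the uniqueness statement. I would reduce uniqueness to the following rigidity claim: \emph{a $d$-polytope on $2d$ vertices carrying $d$ pairwise vertex-disjoint inner diagonals is combinatorially the $d$-crosspolytope.}

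First I would dispose of $d=1,2$ by hand: every $1$-polytope is a segment, every convex quadrilateral is combinatorially a square, and $M(1)=2$, $M(2)=4$. For $3\leq d\leq 5$ a short calculation gives $\lceil(\sqrt{d}+1)^2\rceil>2d$, so $M(d)=2d$ and any Mani polytope $P$ in these dimensions has $f_0(P)=2d$. By Corollary~\ref{cor:opposite-large} it must satisfy $\Gamma(P)=1$, and then Lemma~\ref{thm:rosenfeld} produces $d$ pairwise vertex-disjoint inner diagonals $\{v_i,\bar v_i\}$ for $i=1,\ldots,d$ that perfectly match up all $2d$ vertices.

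Next I would use convexity to pin down the facets. If some facet $F$ contained both endpoints of an inner diagonal, the whole segment $[v_i,\bar v_i]$ would lie in the convex set $F\subseteq\partial P$, contradicting the fact that an inner diagonal meets the interior of $P$. Hence every facet contains at most one vertex from each of the $d$ pairs, so at most $d$ vertices in total; together with the lower bound of $d$ vertices per facet, every facet is a $(d-1)$-simplex containing exactly one vertex from each pair. Thus $P$ is simplicial, and each facet is naturally coded by a sign pattern $\epsilon\in\{+,-\}^d$ via $F_\epsilon=\{v_1^{\epsilon_1},\ldots,v_d^{\epsilon_d}\}$.

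The last step, which I regard as the main combinatorial point, is to show that \emph{every} sign pattern actually arises as a facet, so that $P$ has the face lattice of the crosspolytope. Pick any facet $F_\epsilon$; each of its $d$ ridges is obtained by deleting one vertex $v_i^{\epsilon_i}$ and lies in exactly one further facet of $P$. By the previous paragraph this further facet is again of the form $F_{\epsilon'}$; since it shares the ridge with $F_\epsilon$, the patterns $\epsilon$ and $\epsilon'$ agree in every coordinate except $i$, so $\epsilon'=\epsilon^{(i)}$ is the single-flip of $\epsilon$. Hence the set of realized sign patterns is closed under single coordinate flips, and since $\{+,-\}^d$ is connected under these flips the full hypercube is realized, i.e.\ $P$ is combinatorially the $d$-crosspolytope. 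This sign-flip closure argument is the only nontrivial point of the proof; it is essentially a direct consequence of simpliciality together with the inner-diagonals-avoid-facets observation.
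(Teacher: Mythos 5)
Your proposal is correct and follows essentially the same route as the paper: dispose of $d=1,2$ by inspection, invoke Corollary~\ref{cor:opposite-large} to get $\Gamma(P)=1$ for $3\le d\le 5$, invoke Lemma~\ref{thm:rosenfeld} to obtain $f_0=2d$ and a perfect matching by inner diagonals, and then observe that no facet can contain both endpoints of an inner diagonal. The only difference is that you spell out the final step in more detail than the paper (which compresses it to ``the set of facets is a subset of the facets of the $d$-crosspolytope, hence $P$ is the crosspolytope''): you note that each facet, having at least $d$ vertices and at most one per pair, must be a simplex picking exactly one vertex per pair, and then run the sign-flip/connectivity argument on $\{+,-\}^d$ to see that all $2^d$ such simplices actually occur. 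This fills in exactly the gap the paper elides, so your write-up is a slightly more self-contained version of the same proof.
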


\begin{proof}
The cases $d=1,2$ are trivial, so
assume $d \geq 3$. Let $P$ be a Mani $d$-polytope with $3 \leq d \leq 5$. 
By Corollary~\ref{cor:opposite-large}, we have $\Gamma(P) = 1$.
Lemma~\ref{thm:rosenfeld} and the existence of the crosspolytopes
then imply that $f_0(P) = 2d$ and 
that there is a perfect matching on the inner diagonals. Since any facet
of $P$ can contain only one vertex of any inner diagonal, the set
of facets is a subset of the facets of the $d$-crosspolytope. This 
implies that $P$ \emph{is} the $d$-crosspolytope.
\end{proof}

\section{Nonsimplicial Mani polytopes}
\label{sec:mani-polytopes}

\begin{theorem}
\label{thm:mani-nonsimplicial}
There exists a nonsimplicial Mani $d$-polytope for every $d \geq 6$.
\end{theorem}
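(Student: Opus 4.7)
The plan is to build such polytopes through their Gale diagrams. For given $d \geq 6$, set $n = M(d)$ and $m = n-d-1$. As explained in the introduction, a Mani $d$-polytope on $n$ vertices is the same data as a positively $2$-spanning vector configuration $V \subseteq \R^m$ of cardinality $n$ whose associated polytope is illuminated. Under Gale duality, facets of the polytope correspond to minimal positively-spanning subconfigurations of $V$; generically these have $m+1$ elements (simplex facets), so the dual polytope is nonsimplicial precisely when $V$ admits a minimal positively-spanning subset of size strictly less than $m+1$. The task therefore reduces to exhibiting, for every $d \geq 6$, such a $V$.

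I would start from Mani's own simplicial construction of Mani polytopes --- a cyclic $d$-polytope with $d + p(d)$ vertices stacked on $\lceil d/p(d) \rceil + 1$ well-chosen facets --- and modify it to introduce a nonsimplicial facet while preserving the illumination property and the total vertex count. The idea is to remove two of the stacked apexes that sit above adjacent simplex facets of the cyclic base, replace them with a single apex sitting above the union of those two facets so that the two simplex facets coalesce into a single nonsimplicial facet, and then reintroduce one additional stacked vertex elsewhere, chosen so that every base vertex still lies on an inner diagonal through some apex. For the small dimensions $d \in \{6,7\}$, where $M(d) = 2d$ coincides with the crosspolytope count, a similar effect can be obtained by perturbing the Gale diagram of the $d$-crosspolytope (which consists of $d$ distinct directions in $\R^{d-1}$ summing to zero, each with multiplicity $2$) so as to create a short positive circuit while maintaining positive $2$-spanning.

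The execution steps are: (a)~write the modified configuration $V$ down explicitly; (b)~verify that $V$ is positively $2$-spanning, which by Gale duality yields unneighborliness of the dual polytope; (c)~exhibit the short positive circuit, which witnesses the nonsimplicial facet; (d)~for every vertex $v$ of the dual polytope produce a partner $u$ such that the segment $[u,v]$ is an inner diagonal, not merely a missing edge. Steps (a)--(c) are design constraints on $V$ and should be routine once the configuration is specified.

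Step (d) is the main obstacle. In a simplicial polytope every missing edge is automatically an inner diagonal, but in a nonsimplicial polytope a missing edge may lie on the boundary of a higher nonsimplicial face --- exactly the gap between unneighborly and illuminated. To handle this I would translate the inner-diagonal condition into Gale language: writing $v_u, v_v \in V$ for the Gale vectors of $u,v$, the segment $[u,v]$ is an inner diagonal if and only if no nonempty subset $W \subseteq V \setminus \{v_u, v_v\}$ satisfies $0 \in \mathrm{relint}(\mathrm{pos}(W))$. The verification then proceeds vertex by vertex, relying on symmetry for the vertices untouched by the modification and on an explicit case analysis for the few vertices incident to the newly created nonsimplicial facet.
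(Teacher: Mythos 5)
Your high-level plan (Gale diagram plus stacking) is in the right territory, and your Gale-dual characterizations of nonsimpliciality and of inner diagonals are both correct. But the proposal has two genuine gaps, and the paper's actual proof takes a structurally different and simpler route that avoids both.

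The first gap is the ``coalescing'' step. You propose to replace two apexes stacked over adjacent simplex facets $F_1, F_2$ of the cyclic base by a single apex ``sitting above the union of those two facets so that the two simplex facets coalesce into a single nonsimplicial facet.'' This does not happen for a generic apex beyond $F_1$ and $F_2$: taking the convex hull with such a point destroys $F_1$, $F_2$ and their shared ridge, and the \emph{new} facets are pyramids over the remaining ridges of $F_1$ and $F_2$, hence all simplices. The resulting polytope is still simplicial. You can salvage the idea by placing the apex degenerately in the affine hull of $F_1$ while beyond $F_2$, so that $\mathrm{conv}(F_1\cup\{w\})$ becomes a nonsimplicial facet --- but this is a different and more delicate operation than the one you describe, you do not specify it, and you then have to re-audit which base vertices still receive inner diagonals after deleting two apexes and adding two in new positions. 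None of that is carried out. The second gap is that step (d), which you correctly identify as the crux (missing edges on a nonsimplicial boundary face are not automatically inner diagonals), is left as a ``vertex by vertex'' check on an unspecified configuration; there is nothing to verify because no explicit $V$ is written down.

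The paper sidesteps both difficulties by building a nonsimplicial \emph{base} polytope $Q$ on $d+p$ vertices directly via an explicit Gale diagram: $\ell$ copies of the positive basis $B=\{-\mathbf{1},e_1,\ldots,e_{p-1}\}$ of $\R^{p-1}$, $q-\ell$ copies of $-B$, and a handful of padding vectors. Because both $B$ and $-B$ occur ($1\le\ell\le q-1$), the diagram contains an opposite pair $\{v,-v\}$, a positive circuit of length two, so $Q$ already has a nonsimplicial facet (once $p\ge3$). At the same time the copies of $B$ and $-B$, plus one completed positive basis $B'$, give $q+1$ simplex-facet complements of $Q$ that cover all vertices. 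Stacking on those $q+1$ simplex facets then produces the illumination \emph{for free}: each stacking apex is beyond exactly one facet and beneath all others, so it is joined by an inner diagonal to every old vertex not on that facet, and the covering property guarantees every vertex of $Q$ is hit. Nonsimpliciality is preserved because only simplex facets of $Q$ are stacked. This is why the paper never needs the subset-by-subset inner-diagonal criterion at all --- the hard verification you defer to step (d) is replaced by the elementary geometry of stacking. If you want to complete your own route, the minimum you would need is an explicit coordinate placement realizing the ``coalesce'' operation and a case check that the reshuffled apexes still illuminate every vertex of the cyclic base; as written the argument does not yet establish the theorem.
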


\begin{proof}
For every $d \geq 6$ we construct a nonsimplicial Mani $d$-polytope.
(Observe that for $d=6,7$ we have $\unnmani = 2d$.) 
Let $p \geq 1$, $q \coloneqq \ceil{d / p}$, and choose an $\ell$ with 
$1 \leq \ell \leq q-1$.

We construct a nonsimplicial polytope $Q$ on $d+p$ vertices 
that has $q+1$ simplex facets, such that
stacking onto these facets produces a nonsimplicial illuminated $d$-polytope.
(What we describe here is in fact a whole family of such polytopes, indexed
by the parameter $\ell$.) 

We describe $Q$ in terms of a Gale diagram $A$.
Let 
\[
B = \{  -\vectorone, e_1, \ldots, e_{p-1} \},
\] where $\vectorone$ denotes
the vector in which all entries are $1$. This is a positive basis of
$\Real^{p-1}$ of cardinality $p$.  
The vectors in $A$ are the following:
\begin{compactenum}[(1)]
\item Take $\ell$ copies of $B$, and denote them by $B_1, \ldots, B_\ell$.
\item Take $q-\ell$ copies of $-B$, and denote them by $\tilde{B}_1, \ldots, \tilde{B}_{q-\ell}$. 
\item Furthermore, take the vectors $\vectorone, -e_1, \ldots,-e_{d+p-pq-1}$.
\end{compactenum}
Then the number of vectors in $A$ is $d+p$. 

By the translation between Gale diagram and polytope combinatorics, 
every $B_i$, $i=1, \ldots \ell$,  and every $\tilde{B}_j$,
$j=1,\ldots, q-\ell$ corresponds to a
facet complement of size $p$ in $Q$, that is, to the complement of a simplex facet. If we augment
the set $\{ \vectorone, -e_1, \ldots, -e_{d+p-pq-1} \}$ to a positive basis $B'$
by taking the last $pq-d$ vectors of $\tilde{B}_1$, we get that
\[
\{ B_i : i=1, \ldots, \ell \} \cup \{ \tilde{B}_j : j = 1, \ldots, q-\ell \} \cup \{
B' \} 
\]
is a set of subconfigurations of $A$ that correspond to complements of 
simplex facets of $Q$. These complements cover all vertices of $Q$.
Thus, stacking onto the corresponding facets we obtain an illuminated polytope
$P$ on $d+p+q+1$ vertices, since every vertex will be on an inner diagonal 
to one of the stacking vertices. For $p \coloneqq p(d)$ we get that $P$ is a Mani polytope.

If $1 \leq \ell \leq q-1$, then there is a set of two opposite vectors in $A$,
which corresponds to a facet complement, so $Q$ is
nonsimplicial unless $p=2$. For $d \geq 7$, we have $p(d) \geq 3$
and we indeed get a nonsimplicial polytope. 
However, for $d=6$ we get $p = p(d) = 2$ and $q=3$. In this case, we
choose $p \coloneqq 3$ instead of $p \coloneqq p(d)$. 
Then $q = \ceil{d/p} = 2$ and we have $f_0(P) = 12 = M(6)$, that is, 
$P$ is a Mani polytope.

In both cases, the polytope $P$ is nonsimplicial, because $Q$ is nonsimplicial
and we only stack onto simplex facets. 
\end{proof}
 
We look at two examples that arise from the above description.
\begin{example}
For $d=16$, $p=p(d) = 4$, $q = 4$, and $\ell=3$, we display the result 
of the construction as an ``affine
Gale diagram'' in Figure~\ref{fig:gale-illuminated-16}, where all
points in a cluster represent distinct copies of a single point.
(See~\cite[Chapter 6]{Ziegler1995} for affine Gale diagrams 
and how they are related to ordinary Gale diagrams.) 

In this case, the polytope $Q$
has $f_0 = 20$ and five disjoint simplex facet complements of size four that
cover all vertices. This yields a nonsimplicial illuminated 
$16$-polytope on $M(16) = 25$ vertices.
\end{example}

\begin{example}
For the special case $d=6$, we get a polytope $Q$ as in the proof of
Theorem~\ref{thm:mani-nonsimplicial}
by constructing the Gale diagram in Figure~\ref{fig:gale-illuminated-6} 
with $p=3$, $q=2$, and $\ell=1$.

The Gale diagram has three disjoint positive bases that cover all vectors:
the basis $B_1 = \{ -\vectorone, e_1, e_2 \}$,  and the bases
$\tilde{B}_1 = B' = \{  \vectorone, -e_1, -e_2 \}$. These bases correspond to complements of simplex
facets of $Q$. Stacking onto these three facets produces a
nonsimplicial illuminated $6$-polytope on $M(6) = 12$ vertices.
\end{example}

\begin{figure}[t]
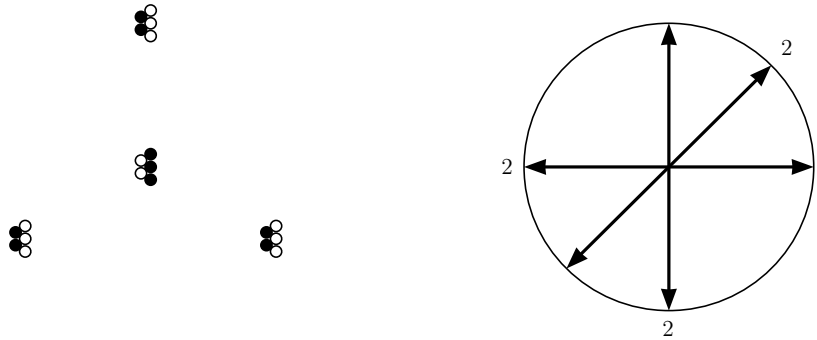

\subfigure[Stacking onto a suitable set of facets of a polytope
with this Gale diagram
yields a nonsimplicial Mani $16$-polytope.]{
\begin{minipage}[h]{.45\linewidth}
\centering
\includegraphics[scale=.6]{images/gale-illuminated-16.eps}
\label{fig:gale-illuminated-16}
\end{minipage}
}
\hfill
\subfigure[Stacking onto three well-chosen facets of a polytope 
with this Gale diagram
yields a nonsimplicial Mani $6$-polytope.]{
\begin{minipage}[h]{.45\linewidth}
 \centering
\includegraphics[scale=.6]{images/gale-illuminated-6.eps}
\label{fig:gale-illuminated-6}
\end{minipage}
}
\label{fig:gale-illuminated}
\caption{Gale diagrams of building blocks for nonsimplicial Mani
         polytopes.}
\end{figure}

\noindent
{\bf Acknowledgements.} The authors thank Raman Sanyal for bringing Mani's paper
to their attention. Peter McMullen provided valuable comments
during a number of email exchanges in 2008 and 2009.

%

 {\small
\vspace{3ex}

\noindent
 {\scshape Ronald F.~Wotzlaw, MA 6-2, Inst.\ Mathematics, TU
 Berlin, 10623 Berlin, Germany}

 \emph{E-mail address:} {\tt wotzlaw@math.tu-berlin.de}

 \vspace{1ex}

\noindent
 {\scshape G\"unter M.~Ziegler, MA 6-2, Inst.\ Mathematics, TU
 Berlin, 10623 Berlin, Germany}

 \emph{E-mail address:} {\tt ziegler@math.tu-berlin.de}

 }

\end{document}